\numberwithin{equation}{section} \theoremstyle{plain}
\newtheorem{thm}{Theorem}[section]
\newtheorem{prop}[thm]{Proposition}
\newtheorem{cor}[thm]{Corollary}
\newtheorem{lem}[thm]{Lemma}
\newtheorem*{hpt*}{Hipoteza}
\newtheorem*{prob*}{Problem}
\newtheorem*{thm*}{Theorem}
\newtheorem*{pro*}{Proposition}
\newtheorem*{met*}{Method}
\newtheorem*{lem*}{Lemma}
\DeclareMathOperator{\D}{d\!} 
 \DeclareMathOperator{\dom}{D}
 \DeclareMathOperator{\M}{m}
\theoremstyle{definition}
\newtheorem*{exa*}{{\it Example}}
\theoremstyle{remark}
\newtheorem{rem}[thm]{{\it Remark}}
\newtheorem*{rem*}{{\it Remark}}
\def\C{\mathbb{C}}
\def\R{\mathbb{R}}
\def\Z{\mathbb{Z}}
\def\A{{\bf{A}}}
\def\w{{\bf{w}}}
\def\kk{\mathcal K}
\def\ff{\mathcal F}
\def\aa{\mathcal{A}}
\def\bb{\mathcal{B}}
\def\bb{\mathcal{B}}
\def\hh{\mathcal H}
\def\xx{\mathcal X}
\def\N{\mathbb N}
\newcommand{\Le}{\leqslant}
\newcommand{\Ge}{\geqslant}
\newcommand*{\bor}[1]{\mathfrak B(#1)}
\newcommand*{\lin}{\mathtt{lin}\,}
\newcommand*{\escr}{\mathscr E}
\newcommand\ca{C_{\A}}
\newcommand*{\is}[2]{\langle#1,#2\rangle}
\newcommand*{\esf}{\mathsf{E}}
\newcommand*{\hsf}{\mathsf{h}}
\begin{document}
\title[Composition operators via inductive limits]{Unbounded composition operators via inductive limits: cosubnormals with matrical symbols}
\author[P. Budzy\'{n}ski]{Piotr Budzy\'{n}ski}
\address{Katedra Zastosowa\'n Matematyki, Uniwersytet Rolniczy w Krakowie, ul. Balicka 253c, 30-198 Krak\'ow, Poland}
\email{piotr.budzynski@ur.krakow.pl}

\author[P.\ Dymek]{Piotr Dymek}
\address{Katedra Zastosowa\'{n} Matematyki, Uniwersytet Rolniczy w Krakowie, ul.\ Balicka 253c, 30-198 Krak\'ow, Poland}
\email{piotr.dymek@ur.krakow.pl}

\author[A. P{\l}aneta]{Artur P{\l}aneta}
\address{Katedra Zastosowa\'n Matematyki, Uniwersytet Rolniczy w Krakowie, ul. Balicka 253c, 30-198 Krak\'ow, Poland}
\email{artur.planeta@ur.krakow.pl}

\subjclass[2010]{Primary 47B33, 47B37; secondary 47A05, 28C20.}
\keywords{Composition operator in $L^2$-space, subnormal operator, cosubnormal operator, inductive limit of operators.}
\begin{abstract}
We prove, by use of inductive techniques, that assorted unbounded composition operators in $L^2$-spaces with matrical symbols are cosubnormal.
\end{abstract}
\setstretch{1.1}
\maketitle
\section{Introduction}
Composition operators in $L^2$-spaces constitute important class of operators that can be found in many areas of mathematics. They are basic objects in the operatorial model of classical mechanics due to Koopman and von Neumann, ergodic theory, theory of dynamical systems and more. They are also very interesting objects of investigation from the operator theory point of view. They have attracted considerable attention from many mathematicians, which resulted in characterizing many of their properties, mainly in bounded case  (see the monograph \cite{sin-man} and references therein). Unbounded composition operators in $L^2$-spaces have become objects of intensive studies quite recently, but they proved to be extremely interesting (\cite{cam-hor, jab, b-j-j-s-ampa, b-j-j-s-jmaa-14, b-j-j-s-aim, bud-pla-s2}).

Bounded subnormal operators have been introduced by Halmos. Studying subnormality turned out to be highly successful and it led to numerous problems in functional analysis, operator theory and mathematical physics. The theory of bounded operators is well-developed now (see the monograph \cite{con} and references therein). Theory of unbounded subnormals, though having much shorter history, brought plenty of interesting results and problems as well (see \cite{bis,foi,sto-sza-1,sto-sza-2,sto-sza-3} for the foundations). Subnormal operators and their relatives play a vital role in operator theory nowadays.

In this note we deal with assorted composition operators induced by linear transformations of $\R^\kappa$. Such operators have been investigated already in \cite{mla, sto, dan-sto, sto-sto} (in bounded case) and in \cite{b-j-j-s-aim} (in unbounded case). Our main result is a criterion for cosubnormality of these operators (cf. Theorem \ref{matrical-cosubn}). We derive it from a criterion for subnormality given in \cite{b-j-j-s-aim}, for which we provide essentially different proof. Basic ingredients of our approach are inductive limit techniques and a criterion for subnormality of general Hilbert space operators invented in \cite[Theorem 3.1.2]{b-j-j-s-jmaa-12} (which relies heavily on \cite{c-s-sz}). It is known that inductive limits of operators are very useful and versatile tools when dealing with unbounded operators (cf. \cite{mar, jan}). In particular, they can be used when studying the questions of boundedness and dense definiteness of composition operators (cf. \cite{bud-pla-s2}). As we show here, they can be also applied when dealing with cosubnormality.
\section{Preliminaries}
In all what follows $\Z_+$ stands for the set of nonnegative integers and $\N$ for the set of positive integers; $\R$ denotes the set of real numbers, $\C$ denotes the set of complex numbers.

Let $\hh$ and $\{\hh_k\}_{k=1}^\infty$ be Hilbert spaces. If $\hh\subseteq \hh_{k+1}\subseteq \hh_k $ for every $k\in\N$, where ``$\subseteq$'' means inclusion of vector spaces, and $\|f\|_{\hh}=\lim_{k\to\infty}\|f\|_{\hh_k}$ for every $f\in\hh$, then we write $\hh_k\downarrow\hh$ as $k\to\infty$.

Let $\hh$ be a (complex) Hilbert space and $T$ be an operator in $\hh$ (all operators are assumed to be linear in this paper). By $\dom(T)$ we denote the domain of $T$. $\overline{T}$ stands for the closure of $T$, and $T^*$ is the adjoint of $T$ (if it exists). Let $T$ be a closable operator in a complex Hilbert space $\hh$ and $\ff$ be a subspace of $\dom(T)$; if $\overline{T|_\ff}=\overline{T}$, then $\ff$ is said to be a core of $T$. A closed densely defined operator $N$ in $\hh$ is said to be normal if $N^*N=NN^*$. A densely defined operator $S$ in $\hh$ is said to be subnormal if there exists a complex Hilbert space $\kk$ and a normal operator $N$ in $\kk$ such that $\hh \leq \kk$ (isometric embedding) and $Sh = Nh$ for all $h \in \dom(S)$. Finally, a densely defined operator $S$ in $\hh$ is cosubnormal if $S^*$ is subnormal.

Let $(X,\aa,\mu)$ be a $\sigma$-finite measure space. The space of all $\aa$-measurable $\C$-valued functions with $\int|f|^2\D\mu<\infty$ is denoted by $L^2(\mu)=L^2(X,\aa,\mu)$. Let $\A$ be an $\aa$-measurable transformation of $X$, i.e., $\A$ is a self-map of $X$ such that $\A^{-1}(\aa)\subseteq \aa$. Define the measure $\mu\circ \A^{-1}$ on $\aa$ by setting $\mu\circ \A^{-1} (\sigma)=\mu(\A^{-1}(\sigma))$, $\sigma\in\aa$. If $\A$ is nonsingular, i.e., $\mu\circ \A^{-1}$ is absolutely continuous with respect to $\mu$, then the operator
    \begin{align*}
    \ca\colon L^2(\mu) \supseteq \dom(\ca) \to L^2(\mu)
    \end{align*}
given by
   \begin{align*}
    \dom(\ca)=\{f \in L^2(\mu) \colon f\circ \A \in L^2(\mu)\} \text{ and } \ca f=f\circ \A \text{ for } f\in \dom(\ca),
   \end{align*}
is well defined\footnote{Clearly, the reverse is also true, i.e., if $\ca$ is well defined, then $\A$ is nonsingular.} and closed in $L^2(\mu)$ (cf. \cite[Section 3]{b-j-j-s-ampa}). We call it a composition operator (induced by $\A$) and we say that $\A$ is the symbol of $\ca$. If the Radon-Nikodym derivative
    \begin{align*}
    \hsf_{\bf \A}=\frac{\D \mu\circ \A^{-1}}{\D\mu}
    \end{align*}
belongs to $L^\infty(\mu)$, which is the space of all $\C$-valued and essentially bounded functions on $X$, then $\ca$ is bounded on $L^2(\mu)$ and $ \|\ca\|=\|\hsf_{\bf A}\|_{L^\infty(\mu)}^{1/2}$. The reverse is also true. By the measure transport theorem we get
    \begin{align*}%\label{domain}
    \dom(\ca)=L^2((1+\hsf_{\bf A})\D\mu).
    \end{align*}
It follows from \cite[Proposition 3.2]{b-j-j-s-ampa} that
    \begin{align}\label{dense}
    \text{$\overline{\dom(\ca)}=L^2(\mu)$ if and only if $\hsf_{\bf A}<\infty$ a.e.\ $[\mu]$}.
    \end{align}
%In case of finite measure spaces dense definiteness of $\ca$ is automatic.
%\begin{lem}\label{densepower}
%If $(X,\aa,\mu)$ is a finite measure space and $\A$ is a nonsingular transformation of $X$, then $\chi_\sigma\in\dom(\ca^n)$ for every $\sigma\in\aa$ and $n\in\N$. Moreover, $\dom^\infty(\ca)$ is dense in $L^2(\mu)$.
%\end{lem}
%\begin{proof}
%Since $\mu(X)<+\infty$, we deduce that $\chi_\sigma\in\dom(C_{\A}^n)$ for all $\sigma\in\aa$ and $n\in\N$. Therefore, $\dom(C_{\A}^n)$ is dense in $L^2(\mu)$ for every $n\in\Z_+$. This and \cite[Corollary 4.5 and Theorem 4.7]{b-j-j-s-ampa} yield the ``moreover'' part.
%\end{proof}
The adjoint of a composition operator induced by $\aa$-bimeasurable transformation turns out to be the weighted composition operators (cf. \cite[Lemma 6.4]{cam-hor} and \cite[Corollary 7.3]{b-j-j-s-ampa}):
\begin{align}\label{adjoint}
\begin{minipage}{75ex}
if $\A$ is an invertible transformation of $X$ such that both the $\A$ and $\A^{-1}$ are $\aa$-measurable and nonsingular, then
$$\dom\big(\ca^*\big)=\Big\{ f\in L^2(\mu)\colon \hsf_{\A}\cdot \big(f\circ \A^{-1}\big)\in L^2(\mu) \Big\}$$
and
$$\ca^* f= \hsf_{\A}\cdot \big(f\circ \A^{-1}\big),\quad f\in\dom\big(\ca^*\big).$$
\end{minipage}
\end{align}

Denote by $\escr_+$ the set of all entire functions $\gamma$ on $\C$ of the form
    %\begin{align*}% \label{reprez}
    $\gamma(z) = \sum_{n=0}^\infty a_n z^n$, for $z \in \C$,
    %\end{align*}
where $a_n$ are nonnegative real numbers and $a_k > 0$ for some $k \Ge 1$. For a given positive integer $\kappa$, a function $\gamma\in\escr_+$ and a norm $|\cdot|$ on $\R^\kappa$ induced by an inner product we define the $\sigma$-finite measure $\mu_\gamma^{|\cdot|}$ on $\bor{\R^\kappa}$, the $\sigma$-algebra of Borel subsets of $\R^\kappa$, by
\begin{align*}
\mu_\gamma^{|\cdot|}(\D x) = \gamma(|x|^2) \M_\kappa(\D x),
\end{align*}
where $\M_\kappa$ is the $\kappa$-dimen\-sional Lebesgue measure on $\R^\kappa$. If $\A$ is a linear transformation of $\R^\kappa$ (clearly, such an $\A$ is $\bor{\R^\kappa}$-measurable), we can verify that the composition operator $C_\A$ in $L^2(\mu_\gamma^{|\cdot|})$ is well-defined if and only if $\A$ is invertible. If this is the case, then (cf.\ \cite[equation (2.1)]{sto})
   \begin{align} \label{pochodna}
    \hsf_\A (x) = \frac{1}{|\det \A|} \frac{\gamma(|\A^{-1}x|^2)}{\gamma(|x|^2)}, \quad x \in \R^\kappa \setminus \{0\}.
   \end{align}
(Here, and later on, $|\det\A|$ stands for the modulus of the determinant of $\A$.) Hence, by \eqref{dense} and \cite[Proposition 6.2]{b-j-j-s-ampa}, each well-defined composition operator $\ca$ is automatically densely defined and injective. The question of boundedness of $\ca$ has the following solution.
\begin{thm}[\mbox{\cite[Proposition 2.2]{sto}}]\label{bounded}
Let $\gamma$ be in $\escr_+$ and $|\cdot|$ be a norm on $\R^\kappa$ induced by an inner product. Let $\A$ be an invertible linear transformation of $\R^\kappa$. Then the following assertions hold$:$
\begin{enumerate}
\item If $\gamma$ is a polynomial, then $\A$ induces bounded composition operator on $L^2(\mu_\gamma^{|\cdot|})$ and on $L^2(\mu_{1/\gamma}^{|\cdot|})$.
\item If $\gamma$ is not a polynomial, then $\A$ induces bounded composition operator on $L^2(\mu_\gamma^{|\cdot|})$ $($resp. on $L^2(\mu_{1/\gamma}^{|\cdot|})$$)$ if and only if $\|\A^{-1}\|\Le 1$ $($resp. $\|\A\|\Le 1$$)$.
\end{enumerate}
\end{thm}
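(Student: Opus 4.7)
The plan is to reduce boundedness of $\ca$ in each of the $L^2$-spaces to boundedness of the Radon--Nikodym derivative $\hsf_\A$ and then to analyze a one-parameter family of ratios $r_\lambda(s):=\gamma(\lambda s)/\gamma(s)$. Since $a_k>0$ for some $k\Ge 1$, $\gamma(t)>0$ for $t>0$, so both $\mu_\gamma^{|\cdot|}$ and $\mu_{1/\gamma}^{|\cdot|}$ are mutually absolutely continuous with $\M_\kappa$ on $\R^\kappa\setminus\{0\}$. Combined with the identity $\|\ca\|=\|\hsf_\A\|_{L^\infty}^{1/2}$ recalled before the theorem, formula \eqref{pochodna} makes boundedness of $\ca$ on $L^2(\mu_\gamma^{|\cdot|})$ equivalent to
$$\sup_{x\ne 0}\frac{\gamma(|\A^{-1}x|^2)}{\gamma(|x|^2)}<\infty.$$
The same derivation with $\gamma$ replaced by $1/\gamma$, followed by the substitution $y=\A^{-1}x$, shows that boundedness on $L^2(\mu_{1/\gamma}^{|\cdot|})$ is equivalent to $\sup_{y\ne 0}\gamma(|\A y|^2)/\gamma(|y|^2)<\infty$.

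Writing $x=\sqrt{s}\,v$ with $|v|=1$ turns the first supremum into $\sup_{|v|=1,\,s>0}r_{|\A^{-1}v|^2}(s)$. Because $|\cdot|$ is induced by an inner product, the unit sphere is compact and connected, so $\{|\A^{-1}v|^2:|v|=1\}$ is a compact interval whose maximum equals $\|\A^{-1}\|^2$; nonnegativity of the Taylor coefficients of $\gamma$ makes $\lambda\mapsto r_\lambda(s)$ nondecreasing. Hence the first supremum is finite iff $r_\lambda$ is bounded on $(0,\infty)$ for $\lambda=\|\A^{-1}\|^2$, and likewise the second one is finite iff $r_\lambda$ is bounded for $\lambda=\|\A\|^2$.

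The theorem thus reduces to the dichotomy that \emph{$r_\lambda$ is bounded on $(0,\infty)$ if and only if $\gamma$ is a polynomial or $\lambda\Le 1$}. If $\gamma$ is a polynomial, then $r_\lambda$ extends continuously to $[0,\infty]$ with values $\lambda^m$ at $0$ and $\lambda^{\deg\gamma}$ at $\infty$, where $m=\min\{n:a_n>0\}$; hence $r_\lambda$ is bounded for every $\lambda$, proving (1). If $\lambda\Le 1$, nonnegativity of the $a_n$ makes $\gamma$ nondecreasing, so $r_\lambda\Le 1$; combined with (1) this gives the sufficient direction of (2). For the necessary direction, suppose $\gamma$ is not a polynomial and $\lambda>1$. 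Given $M>0$, choose $N$ with $\lambda^N>2M$ and, using that infinitely many $a_n$ are positive, some $m\Ge N$ with $a_m>0$; then
$$\gamma(\lambda s)\Ge\lambda^N\sum_{n\Ge N}a_n s^n=\lambda^N\Big(\gamma(s)-\sum_{n<N}a_n s^n\Big),$$
while $\gamma(s)\Ge a_m s^m\Ge a_m s^N$ forces $\sum_{n<N}a_n s^n/\gamma(s)\to 0$ as $s\to\infty$. Hence $r_\lambda(s)>M$ for all sufficiently large $s$, so $r_\lambda$ is unbounded, completing (2).

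The principal obstacle is this last estimate -- establishing that $r_\lambda$ must be unbounded in the non-polynomial, $\lambda>1$ regime; the remaining pieces are routine translations between operator-theoretic and pointwise notions of boundedness, or elementary manipulations of nonnegative power series.
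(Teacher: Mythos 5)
This theorem is imported verbatim from \cite[Proposition 2.2]{sto}; the paper you were given states it with a citation and contains no proof of its own, so there is nothing internal to compare against. Judged on its own merits, your argument is correct and complete, and it follows the natural route one would expect (and essentially the route of the cited source): reduce boundedness of $\ca$ to $\hsf_\A\in L^\infty$ via the identity $\|\ca\|=\|\hsf_\A\|_{L^\infty}^{1/2}$ and formula \eqref{pochodna}, use monotonicity of $\lambda\mapsto\gamma(\lambda s)$ (nonnegative Taylor coefficients) to collapse the supremum over the unit sphere to the single parameter $\lambda=\|\A^{-1}\|^2$ (resp.\ $\|\A\|^2$ after the substitution $y=\A^{-1}x$), and then settle the dichotomy for $r_\lambda(s)=\gamma(\lambda s)/\gamma(s)$. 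The three cases are all handled correctly: the polynomial case by the finite limits $\lambda^m$ at $0$ and $\lambda^{\deg\gamma}$ at $\infty$, the case $\lambda\Le 1$ by monotonicity of $\gamma$, and the unboundedness for $\lambda>1$ with $\gamma$ non-polynomial by the tail estimate $\gamma(\lambda s)\Ge\lambda^N\bigl(\gamma(s)-\sum_{n<N}a_ns^n\bigr)$ together with $\gamma(s)\Ge a_ms^m$ for some $m\Ge N$ with $a_m>0$, which kills the correction term as $s\to\infty$. The only cosmetic remark is that the observation about $\{|\A^{-1}v|^2:|v|=1\}$ being an interval is not needed; compactness of the sphere plus monotonicity in $\lambda$ already gives that the supremum is attained at $\lambda=\|\A^{-1}\|^2$.
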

It turns out that subnormality of $\ca$ can be also characterized in terms of the symbol $\A$.
\begin{thm}[\mbox{\cite[Theorem 2.5]{sto}}] \label{matrical-subn-b}
Let $\gamma$ be in $\escr_+$ and $|\cdot|$ be a norm on $\R^\kappa$ induced by an inner product. Let $\A$ be an invertible linear transformation of $\R^\kappa$ such that $\ca$ is a bounded operator on $L^2(\mu_\gamma^{|\cdot|})$. Then $\ca$ is subnormal if and only if $\A$ is normal in $(\R^\kappa, |\cdot|)$.
\end{thm}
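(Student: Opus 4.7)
The plan is to prove both implications separately, translating operator conditions into pointwise conditions on $\A$ and $\gamma$ via \eqref{adjoint} and \eqref{pochodna}.

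For the direction ``$\A$ normal $\Rightarrow \ca$ subnormal'': I would first exploit that $\mu_\gamma^{|\cdot|}$ is invariant under every orthogonal transformation $U$ of $(\R^\kappa,|\cdot|)$, so that $f\mapsto f\circ U$ is a unitary on $L^2(\mu_\gamma^{|\cdot|})$ intertwining $\ca$ with $C_{U^{-1}\A U}$. The real spectral theorem for normal $\A$ then reduces matters to a convenient block diagonal form with $1\times 1$ real blocks and $2\times 2$ rotation--scaling blocks. In this form one constructs an explicit normal extension by isometrically embedding $L^2(\mu_\gamma^{|\cdot|})$ into $L^2$ of a larger, complexified measure on which the composition operator becomes unitarily equivalent to a multiplication by a normal function and thus manifestly normal. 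The inductive-limit criterion \cite[Theorem 3.1.2]{b-j-j-s-jmaa-12} is what allows this construction to be assembled block by block and then validated on a dense invariant core (e.g.\ polynomials, which form a core for every bounded $\ca$ of the kind considered).

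For the converse, using \eqref{adjoint} one computes $\ca^*\ca = M_{\hsf_\A}$ and $\ca\ca^* = M_{\hsf_\A\circ \A}$ as multiplication operators on $L^2(\mu_\gamma^{|\cdot|})$; more generally $(\ca^n)^*\ca^n = M_{\hsf_{\A^n}}$ with
\[
\hsf_{\A^n}(x)=\frac{1}{|\det\A|^{n}}\,\frac{\gamma(|\A^{-n}x|^2)}{\gamma(|x|^2)}.
\]
Subnormality is equivalent (via Lambert's criterion / Halmos--Bram positivity applied to these multiplication operators) to the statement that $\{\hsf_{\A^n}(x)\}_{n\Ge 0}$ is a Hausdorff moment sequence for $\mu_\gamma^{|\cdot|}$-a.e.\ $x$. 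Rewriting this as a moment condition on $n\mapsto \gamma(|\A^{-n}x|^2)$, using that $\gamma\in\escr_+$ is real-analytic and nontrivial, and varying $x$ over the unit sphere of $(\R^\kappa,|\cdot|)$, one deduces that $|\A^{-n}x|^2$ depends only on $|x|^2$ and $n$. In finite-dimensional linear algebra this is equivalent to $\A^*\A=\A\A^*$.

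The main obstacle will be this last step---extracting genuine matrix normality from moment-type conditions for an essentially arbitrary $\gamma\in\escr_+$. Hyponormality alone (the single inequality $\gamma(|\A^{-1}x|^2)\gamma(|\A x|^2)\Ge\gamma(|x|^2)^2$) is not enough, and one must genuinely use the Hausdorff moment property for all $n$. The role of the inductive-limit machinery is precisely to reduce the global positivity requirement on $\ca$ to pointwise moment statements on dense invariant subspaces, so that the passage from analysis on $L^2(\mu_\gamma^{|\cdot|})$ back to finite-dimensional linear algebra on $\A$ becomes tractable.
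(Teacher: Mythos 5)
First, note that the paper does not prove this theorem at all: it is quoted verbatim from \cite[Theorem 2.5]{sto} and used as a black box (its only role here is to feed the bounded case into Lemma \ref{projective} in the proof of Proposition \ref{matrical-subn}). So your attempt can only be measured against Stochel's original argument, whose overall strategy --- reduce subnormality of the bounded $\ca$ to the requirement that $n\mapsto \hsf_{\A^n}(x)$ be a Stieltjes moment sequence for a.e.\ $x$ (Lambert's criterion), and then analyze that condition through the formula $\hsf_{\A^n}(x)=|\det\A|^{-n}\gamma(|\A^{-n}x|^2)/\gamma(|x|^2)$ --- you do correctly identify for the necessity direction.

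There are, however, two genuine gaps. For the converse, your final step deduces that ``$|\A^{-n}x|^2$ depends only on $|x|^2$ and $n$'' and asserts this is equivalent to $\A^*\A=\A\A^*$. It is not: that property says $(\A^{n})^{*}\A^{n}$ is a scalar multiple of the identity, i.e.\ $\A$ is a scalar times an isometry, and it fails for perfectly normal matrices such as $\A=\mathrm{diag}(1,2)$. So the conclusion you aim to extract from the moment condition is simultaneously too strong and unobtainable; the actual hard work --- showing that non-normality of $\A$ destroys the moment property of $n\mapsto\gamma(|\A^{-n}x|^2)$ on a set of positive measure, for an arbitrary $\gamma\in\escr_+$ --- is exactly the step you flag as ``the main obstacle'' and leave unresolved. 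For the forward direction, the proposed ``complexified measure on which the composition operator becomes multiplication by a normal function'' is not a construction: $\ca$ for a non-orthogonal normal $\A$ is not unitarily equivalent to a multiplication operator. The standard (and Stochel's) argument is again via Lambert: for normal $\A$ the spectral theorem gives $|\A^{-n}x|^2=\int t^{n}\,\D\rho_x(t)$ for a compactly supported positive measure $\rho_x$, and since $\gamma$ has nonnegative Taylor coefficients and sums and products of moment sequences are moment sequences, $n\mapsto\hsf_{\A^n}(x)$ is a moment sequence. Finally, the role you assign to the inductive-limit criterion is misplaced: in this paper that machinery serves to pass from the polynomial truncations $\gamma_k$ of $\gamma$ (where Theorem \ref{matrical-subn-b} applies) to the possibly unbounded operator for $\gamma$ itself; it is not a device for assembling a proof of the bounded theorem block by block over the spectral decomposition of $\A$.
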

We close this section by recalling some information concerning weighted composition operators. Let $(X,\aa, \nu)$ be a $\sigma$-finite measure space, $\A$ be a nonsingular $\aa$-measurable transformation of $X$ and $\w$ be a $\C$-valued $\aa$-measurable mapping on $X$ such the measure $(|\w|^2\D\nu)\circ \A^{-1}$ is absolutely continuous with respect to $\nu$. Weighted composition operator $W_{\A,\w} \colon L^2(\nu) \supseteq \dom(W_{\A,\w}) \to L^2(\nu)$ is defined by
   \begin{align*}
    \dom(W_{\A,\w}) & = \{f \in L^2(\nu) \colon \w \cdot (f\circ \A) \in L^2(\nu)\},\\
    W_{\A,\w} f & = \w \cdot (f\circ \A), \quad f \in \dom(W_{\A,\w}).
   \end{align*}
Any such operator $W_{\A,\w}$ is closed. The operator $W_{\A,\w}$ is densely defined if and only if $\big(\esf_\A (|w|^2)\circ \A^{-1}\big)\hsf_\A<\infty$ a.e. $[\nu]$, where $\esf_\A (\cdot)$ denotes the conditional expectation operator with respect to $\sigma$-algebra $\A^{-1}(\aa)$  (cf. \cite[Lemma 6.1]{cam-hor}; see also \cite{b-j-j-s-wco} for more information concerning unbounded weighted composition operators). In particular, if $\A$ is invertible and $\A^{-1}$ is $\aa$-measurable, then $W_{\A,\w}$ is densely defined if and only if $\big(|w|^2\circ \A^{-1}\big)\cdot \hsf_\A<\infty$ a.e. $[\nu]$.
\section{Criterion for cosubnormality}
Our main result is the following criterion for cosubnormality of unbounded composition operators with matrical symbols in $L^2(\mu_{1/\gamma}^{|\cdot|})$.
\begin{thm} \label{matrical-cosubn}
Let $\gamma$ be in $\escr_+$, $|\cdot|$ be a norm on $\R^\kappa$ induced by an inner product and $\A$ be an invertible linear transformation of $\R^\kappa$. If $\A$ is normal in $(\R^\kappa, |\cdot|)$, then $\ca$ is cosubnormal in $L^2(\mu_{1/\gamma}^{|\cdot|})$.
\end{thm}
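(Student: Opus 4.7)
The plan is to recast cosubnormality of $\ca$ in $L^2(\mu_{1/\gamma}^{|\cdot|})$ as subnormality of $C_{\A^{-1}}$ in $L^2(\mu_\gamma^{|\cdot|})$ via a multiplication-operator unitary, and then establish that subnormality by approximating $\gamma$ by its polynomial partial sums and invoking the inductive-limit criterion of \cite[Theorem 3.1.2]{b-j-j-s-jmaa-12}.

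First, I would introduce $W\colon L^2(\mu_\gamma^{|\cdot|}) \to L^2(\mu_{1/\gamma}^{|\cdot|})$ defined by $(Wf)(x)=\gamma(|x|^2)f(x)$. A direct computation shows $W$ is unitary with $(W^{-1}g)(x)=g(x)/\gamma(|x|^2)$. Conjugating $\ca$ through $W$ and using \eqref{adjoint} and \eqref{pochodna}, one obtains
\[
W^{-1}\,\ca\,W \;=\; \frac{1}{|\det \A|}\, C_{\A^{-1}}^{*},
\]
where the adjoint on the right is taken in $L^2(\mu_\gamma^{|\cdot|})$. Taking adjoints on both sides yields $W^{-1}\ca^{*}W=\frac{1}{|\det \A|}\,C_{\A^{-1}}$, so cosubnormality of $\ca$ in $L^2(\mu_{1/\gamma}^{|\cdot|})$ is equivalent to subnormality of $C_{\A^{-1}}$ in $L^2(\mu_\gamma^{|\cdot|})$, since subnormality is preserved by unitary equivalence and by multiplication by nonzero scalars.

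Second, I would set up the inductive-limit framework for $C_{\A^{-1}}$ in $L^2(\mu_\gamma^{|\cdot|})$. Writing $\gamma(z)=\sum_{n=0}^\infty a_n z^n$ and $\gamma_k(z)=\sum_{n=0}^k a_n z^n$, the pointwise monotone convergence $\gamma_k\nearrow \gamma$ yields $\mu_{\gamma_k}^{|\cdot|}\nearrow \mu_\gamma^{|\cdot|}$ and, by the monotone convergence theorem,
\[
L^2(\mu_\gamma^{|\cdot|})\;\subseteq\;L^2(\mu_{\gamma_{k+1}}^{|\cdot|})\;\subseteq\;L^2(\mu_{\gamma_k}^{|\cdot|}),\qquad \|f\|_{L^2(\mu_{\gamma_k}^{|\cdot|})}\nearrow \|f\|_{L^2(\mu_\gamma^{|\cdot|})}\ \text{ for }f\in L^2(\mu_\gamma^{|\cdot|}),
\]
which is precisely the $\hh_k\downarrow\hh$ configuration from the excerpt. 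Since $\A$ is normal, so is $\A^{-1}$; on each level $\gamma_k$ is a polynomial, so by Theorem \ref{bounded}(i) the symbol $\A^{-1}$ induces a bounded operator on $L^2(\mu_{\gamma_k}^{|\cdot|})$, and by Theorem \ref{matrical-subn-b} this operator is subnormal. The operators in this tower are evidently compatible, all being restrictions of $f\mapsto f\circ \A^{-1}$.

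Third, I would verify that this coherent tower of bounded subnormal operators satisfies the hypotheses of \cite[Theorem 3.1.2]{b-j-j-s-jmaa-12}, from which the subnormality of $C_{\A^{-1}}$ on $L^2(\mu_\gamma^{|\cdot|})$ follows. Transporting back through $W$ gives subnormality of $\ca^{*}$ in $L^2(\mu_{1/\gamma}^{|\cdot|})$, i.e., cosubnormality of $\ca$, as desired.

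The main obstacle is the third step: checking that the tower $\bigl(C_{\A^{-1}}|_{L^2(\mu_{\gamma_k}^{|\cdot|})}\bigr)_k$ satisfies the moment-type compatibility required by \cite[Theorem 3.1.2]{b-j-j-s-jmaa-12} (whose roots are in \cite{c-s-sz}), and in particular identifying a common invariant core for $C_{\A^{-1}}$ inside $L^2(\mu_\gamma^{|\cdot|})$ on which the normal extensions at each level can be coherently glued—a natural candidate being the bounded $\bor{\R^\kappa}$-measurable functions of compact support in $\R^\kappa\setminus\{0\}$. A secondary bookkeeping issue is to keep careful track of domains under $W$, so as to preserve dense definiteness of $\ca$ and $\ca^{*}$; this should follow from \eqref{dense} applied to both $\mu_\gamma^{|\cdot|}$ and $\mu_{1/\gamma}^{|\cdot|}$ via \eqref{pochodna}.
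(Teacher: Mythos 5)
Your proposal follows essentially the same route as the paper: your unitary $W$ is the inverse of the paper's $U$ in Lemma \ref{uniteqiv}, and your reduction of cosubnormality of $\ca$ in $L^2(\mu_{1/\gamma}^{|\cdot|})$ to subnormality of $C_{\A^{-1}}$ in $L^2(\mu_\gamma^{|\cdot|})$, followed by polynomial truncation $\gamma_k\nearrow\gamma$, the tower $L^2(\mu_{\gamma_k}^{|\cdot|})\downarrow L^2(\mu_\gamma^{|\cdot|})$, boundedness and subnormality at each level via Theorems \ref{bounded} and \ref{matrical-subn-b}, and the inductive-limit criterion of \cite[Theorem 3.1.2]{b-j-j-s-jmaa-12}, is exactly the paper's argument (Lemma \ref{projective} and Proposition \ref{matrical-subn}). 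The one step you leave open --- producing an invariant core dense in every level --- is precisely what the paper supplies in Corollary \ref{core}: $\dom^\infty(C_{\A^{-1}})$ is a core, proved by showing (via a $\pi$-$\lambda$ density argument, Lemma \ref{rdzen1+} and Proposition \ref{rdzen2}) that the span of characteristic functions of bounded Borel sets is a core; your candidate family of bounded compactly supported functions contains that span and is $C_{\A^{-1}}$-invariant, so it would serve equally well. One small conceptual correction: the criterion does not require gluing normal extensions across levels; it only requires the Cicho\'n--Stochel--Szafraniec positive-definiteness inequalities on the core at each level, which pass to the limit because $\is{x}{y}_{\hh_k}\to\is{x}{y}_{\hh}$, so no coherence of extensions needs to be checked.
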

The proof of the criterion relies on several results, provided below, which are of independent interest. We begin by proving that certain families generated by characteristic functions attached to $\pi$-systems of sets are dense in $L^2$-spaces.

Recall that a nonempty family $\bb$ of subsets of a given set $X$ is a $\pi$-system, whenever $A \cap B \in \bb$ for all $A$ and $B \in \bb$. In turn, if $\bb$ satisfies:
(a) $\varnothing \in \bb$, (b) $A \in \bb$ $\Longrightarrow$ $X \setminus A \in \bb$, and (c) \big($\{A_i\}_{i=1}^{\infty} \subset \bb $ and $A_i\cap A_j=\varnothing$ for $i\neq j$\big) $\Longrightarrow$ $\bigcup_{i=1}^{\infty} A_i \in \bb$, then $\bb$ is said to be a $\lambda$-system.
\begin{lem}\label{rdzen1+}
Let $(X,\aa,\nu)$ be a measure space. Let $\bb\subseteq\aa$ be family of sets satisfying the following conditions:
\begin{enumerate}
\item[(i)] $\bb$ is a $\pi$-system,
\item[(ii)] $\aa =\sigma(\bb)$, i.e., $\aa$ is generated by $\bb$,
\item[(iii)] there exists $\{X_n\}_{n=1}^\infty\subseteq \bb$ such that $X_n\subseteq X_{n+1}$  and $X=\bigcup_{n=1}^\infty X_n$,
\item[(iv)] $\ff:=\lin\{\chi_\sigma\colon \sigma\in\bb\}$, the linear space spanned by $\{\chi_\sigma\colon \sigma\in\bb\}$, is contained in $L^2(X,\aa,\nu)$.
\end{enumerate}
Then the family $\ff$ is dense in $L^2(X,\aa,\nu)$.
\end{lem}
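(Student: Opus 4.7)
The plan is to reduce to showing that characteristic functions of sets of finite measure can be approximated in $L^2$ by elements of $\ff$, and then to use Dynkin's $\pi$--$\lambda$ theorem inside each $X_n$ to identify the sets whose indicators lie in the closure $\overline{\ff}$ of $\ff$ in $L^2(\nu)$.

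First I would observe that condition (iv) applied to $\chi_{X_n}$ forces $\nu(X_n)<\infty$, which is what lets the argument localize to $X_n$. Since simple functions supported on sets of finite measure are dense in $L^2(X,\aa,\nu)$, it suffices to prove that $\chi_A \in \overline{\ff}$ for every $A\in\aa$ with $\nu(A)<\infty$.

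The heart of the proof is to fix $n$ and define
\begin{align*}
\dd_n = \{A\in\aa \colon \chi_{A\cap X_n} \in \overline{\ff}\}.
\end{align*}
I would then check that $\dd_n$ is a $\lambda$-system containing $\bb$. For the $\pi$-system inclusion, if $B\in\bb$ then $B\cap X_n\in\bb$ by (i) and (iii), so $\chi_{B\cap X_n}\in\ff$. For the $\lambda$-system axioms: $\varnothing\in\dd_n$ trivially; if $A\in\dd_n$ then $\chi_{(X\setminus A)\cap X_n}=\chi_{X_n}-\chi_{A\cap X_n}\in\overline{\ff}$ because $\chi_{X_n}\in\ff$ (this is the step that uses finiteness of $\nu(X_n)$, hence condition (iv)); for a disjoint sequence $\{A_i\}\subseteq\dd_n$, the partial sums $\sum_{i=1}^N \chi_{A_i\cap X_n}$ lie in $\overline{\ff}$ and converge to $\chi_{(\bigcup_i A_i)\cap X_n}$ in $L^2(\nu)$ by dominated convergence, with dominating function $\chi_{X_n}\in L^2(\nu)$. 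Dynkin's theorem then gives $\dd_n\supseteq\sigma(\bb)=\aa$, so $\dd_n=\aa$.

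Finally, given $A\in\aa$ with $\nu(A)<\infty$, I would apply dominated convergence once more: $\chi_{A\cap X_n}\to\chi_A$ pointwise as $n\to\infty$ by (iii), dominated by $\chi_A\in L^2(\nu)$, so the convergence holds in $L^2(\nu)$. Since each $\chi_{A\cap X_n}\in\overline{\ff}$, we conclude $\chi_A\in\overline{\ff}$, completing the proof. The only delicate point is really the complementation step in verifying that $\dd_n$ is a $\lambda$-system; everything else is a routine application of the $\pi$--$\lambda$ theorem together with two uses of dominated convergence.
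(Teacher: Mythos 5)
Your proof is correct and follows essentially the same route as the paper: both localize to the sets $X_n$, apply Dynkin's $\pi$--$\lambda$ theorem to the family of sets whose localized indicators lie in the closure of $\ff$, and then use the exhaustion $X_n\uparrow X$ to conclude. The only (cosmetic) difference is that you work throughout in $L^2(\nu)$ with an explicit final dominated-convergence step, whereas the paper passes to the restricted finite measure spaces $(X_k,\aa_k,\nu_k)$ and invokes $\sigma$-finiteness at the end; your write-up actually spells out the details the paper leaves implicit.
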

\begin{proof}
Clearly, by (iii) and (iv), the measure $\nu$ is $\sigma$-finite. For every $k\in\N$, $(X_k, \aa_k, \nu_k)$ is a finite measure space, where $\aa_k=\{\omega\cap X_k\colon \omega\in\aa\}$ and $\nu_k=\nu|_{\aa_k}$, the restriction of $\nu$ to $\aa_k$. For every $k\in\N$ we set $\mathcal{L}_k:=\{\omega\in\aa_k\colon \chi_\omega\in \overline{\ff_k}\}$, where $\overline{\ff_k}$ denotes the $L^2(\nu_k)$-closure of $\ff_k=\lin\{\chi_{\sigma\cap X_k}\colon \sigma\in\bb\}$. Then $\mathcal{L}_k$ is $\lambda$-system and thus, by \cite[Th\'eor\`eme]{sie} (known also as Dynkin's $\pi$-$\lambda$ theorem), we have $\aa_k=X_k\cap\sigma(\bb)\subseteq \mathcal{L}_k$ for all $k\in \N$. Since simple functions are dense in  $L^2$-spaces, we deduce that $\overline{\ff_k}=L^2(\nu_k)$ for every $k\in\N$. This and $\sigma$-finiteness of $\nu$ imply the claim.
\end{proof}
Employing the lemma above and description of the graph norm of $W_{\A,\w}$, we prove that certain families generated by characteristic functions form cores for weighted composition operators.
\begin{prop}\label{rdzen2}
Let $(X,\aa,\nu)$ be a $\sigma$-finite measure space and let $\bb\subseteq\aa$ be a family of sets satisfying conditions {\rm (i)}-{\rm (iii)} of Lemma \ref{rdzen1+}. Let $\A\colon X\to X$ be invertible and such that both $\A$ and $\A^{-1}$ are $\aa$-measurable and nonsingular. Let $\w\colon X\to \C$ be $\aa$-measurable. Assume that $\ff:=\lin\big\{\chi_\sigma\colon \sigma\in\bb\big\}\subseteq \dom(W_{\A,\w})$. Then $\ff$ is a core of $W_{\A,\w}$.
\end{prop}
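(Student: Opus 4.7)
The plan is to reduce the core property to a density statement in an appropriate $L^2$-space and then invoke Lemma \ref{rdzen1+}. Since $W_{\A,\w}$ is closed, it suffices to prove that $\ff$ is dense in $\dom(W_{\A,\w})$ equipped with the graph norm; the point is to identify the graph norm with the norm of an $L^2$-space whose measure falls within the scope of Lemma \ref{rdzen1+}.

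First I would compute the graph norm explicitly. Using that $\A$ is invertible with $\A^{-1}$ measurable and nonsingular, the change-of-variable formula $\int h\circ \A\,\D\nu=\int h\cdot \hsf_\A\,\D\nu$ (applied to $h=|w\circ \A^{-1}|^2 |f|^2$) yields
\begin{align*}
\|f\|_{L^2(\nu)}^2+\|W_{\A,\w}f\|_{L^2(\nu)}^2=\int_X |f|^2\,\phi\,\D\nu,\qquad f\in\dom(W_{\A,\w}),
\end{align*}
where $\phi:=1+\big(|w|^2\circ \A^{-1}\big)\hsf_\A$. Consequently $\dom(W_{\A,\w})$ with its graph norm is isometrically $L^2(X,\aa,\phi\,\D\nu)$.

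Next I would apply Lemma \ref{rdzen1+} to the measure space $(X,\aa,\phi\,\D\nu)$. Conditions (i), (ii), (iii) concern $\bb$ only and are inherited from the assumptions of the proposition; condition (iv) requires $\ff\subseteq L^2(\phi\,\D\nu)$, which is precisely the standing hypothesis $\ff\subseteq\dom(W_{\A,\w})$. $\sigma$-finiteness of $\phi\,\D\nu$ is automatic from (iii) combined with (iv) applied to the indicator functions $\chi_{X_n}$, so no extra work is needed. Lemma \ref{rdzen1+} therefore gives that $\ff$ is dense in $L^2(\phi\,\D\nu)$, i.e.\ dense in $\dom(W_{\A,\w})$ in the graph norm. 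Closedness of $W_{\A,\w}$ then yields $\overline{W_{\A,\w}|_\ff}=W_{\A,\w}$, which is the required core property.

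The only genuinely non-formal step is the identification of the graph norm, which depends on writing $w(x)=(w\circ \A^{-1})(\A x)$ in order to bring $\int |w|^2|f\circ\A|^2\D\nu$ into the form $\int g\circ\A\,\D\nu$ suitable for the transport formula. Everything afterwards is a direct appeal to Lemma \ref{rdzen1+}; I do not anticipate any substantial obstacle.
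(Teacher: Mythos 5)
Your proposal is correct and follows essentially the same route as the paper: identify the graph norm of $W_{\A,\w}$ with the norm of $L^2\big((1+(|\w|^2\circ\A^{-1})\hsf_\A)\D\nu\big)$ via the measure transport theorem, then apply Lemma \ref{rdzen1+} to that measure. The only cosmetic difference is that the paper obtains finiteness a.e.\ of $(|\w|^2\circ\A^{-1})\hsf_\A$ by citing \cite[Lemma 6.1]{cam-hor}, whereas you derive $\sigma$-finiteness directly from $\chi_{X_n}\in\dom(W_{\A,\w})$; both are adequate.
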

\begin{proof}
It follows from Lemma \ref{rdzen1+} that $\ff$, and consequently $\dom(W_{\A,\w})$, is dense in $L^2(\nu)$. Thus, by \cite[Lemma 6.1]{cam-hor}, $J:=(|\w|^2\circ A^{-1})\cdot \hsf_\A<\infty$ a.e. $[\nu]$. This in turn implies that the measure $J\D\nu$ is $\sigma$-finite. Now, by the measure transport theorem, we have
\begin{align*}
\|f\|^2 + \|W_{\A,\w} f \|^2 &= \|f\|^2 + \int \w^2 \cdot |f|^2 \circ \A \D \nu \\
&= \|f\|^2 + \int \w^2 \circ \A^{-1} \cdot |f|^2 \cdot\hsf_\A \D \nu  = \int |f|^2 (1 + J) \D \nu .
\end{align*}
Thus $\ff$ is a core of $W_{\A, \w}$ if and only if $\ff$ is dense in $L^2\big((1+J)\D\nu\big)$. Since the measure $(1+J)\D\nu$ is $\sigma$-finite, it suffices to apply Lemma \ref{rdzen1+} (with $(1+J)\D\nu$ in place of $\nu$) to prove the claim.
\end{proof}
As a consequence we get the following (cf.\ \cite[Theorem 4.7]{b-j-j-s-ampa}).
\begin{cor}\label{core}
Let $\gamma$ be in $\escr_+$, $|\cdot|$ be a norm on $\R^\kappa$ induced by an inner product, $\A$ be an invertible linear transformation of $\R^\kappa$ and $\ca$ be the composition operator in $L^2(\mu_\gamma^{|\cdot|})$ induced by $\A$. Then $\dom^\infty(\ca)$ is a core of $\ca$.
\end{cor}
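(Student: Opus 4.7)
The plan is to deduce this as a direct application of Proposition \ref{rdzen2} with $\w \equiv 1$ (in which case $W_{\A,\w}$ is exactly $\ca$), choosing a $\pi$-system $\bb$ whose characteristic functions already lie in $\dom^\infty(\ca)$. A natural choice is
\[
\bb := \{\sigma \in \bor{\R^\kappa} \colon \sigma \text{ is bounded}\}.
\]
Conditions (i)--(iii) of Lemma \ref{rdzen1+} are immediate: the intersection of two bounded Borel sets is bounded and Borel, open balls lie in $\bb$ and generate $\bor{\R^\kappa}$, and $X_n := \{x\in\R^\kappa\colon |x|<n\}$ is an increasing exhaustion of $\R^\kappa$ inside $\bb$. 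For the standing assumptions of Proposition \ref{rdzen2}, I would note that both $\A$ and $\A^{-1}$ are continuous, hence Borel measurable; the linearity of $\A$ and $\A^{-1}$ preserves Lebesgue null sets, and since the zero set of $x\mapsto \gamma(|x|^2)$ is a countable union of spheres (hence Lebesgue null), the measures $\mu_{1/\gamma}^{|\cdot|}$ and $\M_\kappa$ have the same null sets, so $\A$ and $\A^{-1}$ are nonsingular with respect to $\mu_\gamma^{|\cdot|}$.

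The key step is then to verify $\ff := \lin\{\chi_\sigma\colon \sigma\in\bb\}\subseteq \dom^\infty(\ca)$. For $\sigma\in\bb$ and every $n\in\N$, the set $\A^{-n}(\sigma)$ is again bounded (as the image of a bounded set under the continuous linear map $\A^{-n}$), and hence has finite $\mu_\gamma^{|\cdot|}$-measure since the density $\gamma(|\cdot|^2)$ is bounded on bounded sets. Iteration gives $\ca^n\chi_\sigma = \chi_{\A^{-n}(\sigma)}\in L^2(\mu_\gamma^{|\cdot|})$, so $\chi_\sigma\in\dom(\ca^n)$ for every $n$; thus $\ff\subseteq \dom^\infty(\ca)$, which simultaneously confirms hypothesis (iv) of Lemma \ref{rdzen1+} for $\nu = \mu_\gamma^{|\cdot|}$.

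Proposition \ref{rdzen2}, applied with $\nu = \mu_\gamma^{|\cdot|}$ and $\w\equiv 1$, now yields that $\ff$ is a core of $\ca$. Since $\ca$ is closed and $\ff\subseteq \dom^\infty(\ca)\subseteq \dom(\ca)$, the inclusions $\overline{\ca|_\ff}\subseteq \overline{\ca|_{\dom^\infty(\ca)}}\subseteq \overline{\ca}=\ca=\overline{\ca|_\ff}$ force $\overline{\ca|_{\dom^\infty(\ca)}}=\ca$, i.e., $\dom^\infty(\ca)$ is a core of $\ca$.

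There is no genuinely hard step here; the only point demanding a little care is the choice of $\bb$, which must simultaneously be rich enough to satisfy (i)--(iii) of Lemma \ref{rdzen1+} and restrictive enough to force its characteristic functions into $\dom^\infty(\ca)$. Taking bounded Borel sets balances these two requirements, exploiting precisely the fact that $\A$, being linear and invertible, maps bounded sets to bounded sets in both directions.
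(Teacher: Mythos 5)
Your proposal is correct and follows essentially the same route as the paper: the authors also apply Proposition \ref{rdzen2} (with $\w\equiv 1$) to the $\pi$-system of bounded Borel sets, written there as $\{\sigma\cap\{|x|\le k\}\colon \sigma\in\bor{\R^\kappa},\,k\in\N\}$, and conclude that $\lin\{\chi_\sigma\colon\sigma\in\bb\}\subseteq\dom^\infty(\ca)$ is a core, hence so is $\dom^\infty(\ca)$. Your write-up merely supplies the routine verifications the paper leaves implicit (aside from a harmless typo where $\mu_{1/\gamma}^{|\cdot|}$ should read $\mu_{\gamma}^{|\cdot|}$).
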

\begin{proof}
Let $\bb$ denote the family of all sets of the form $\sigma\cap\{x\in\R^\kappa\colon |x|\leq k\}$ with $\sigma\in\bor{\R^\kappa}$ and $k\in\N$. Then $\bb$ satisfies conditions (i)-(iv) of Lemma \ref{rdzen1+}. Moreover, $\ff:=\lin \{\chi_\omega\colon \omega\in \bb\}\subseteq \dom^\infty(C_\A)$. This and Proposition \ref{rdzen2} implies that $\ff$ and consequently $\dom^\infty(\ca)$ are cores of $\ca$.
\end{proof}
\begin{rem}
Another way of proving that $\dom^\infty(\ca)$ is a core of $\ca$ is to use the so-called Mittag-Leffler theorem, as it was done in the proof of \cite[Theorem 4.7]{b-j-j-s-ampa}).
\end{rem}
Cosubnormality of a composition operator induced by linear transformation of $\R^\kappa$ is strongly related to subnormality of a composition operator induced by the inverted symbol. This is a consequence of the following fact, which essentially is due to Stochel (see \cite[equality (UE) on page 309]{sto} for the case of bounded operators).
\begin{lem}\label{uniteqiv}
Let $\gamma$ be in $\escr_+$, $|\cdot|$ be a norm on $\R^\kappa$ induced by an inner product and $\A$ be an invertible linear transformation of $\R^\kappa$. Then the operators $|\det\A|\,\ca^*$ in $L^2(\mu_{1/\gamma}^{|\cdot|})$ and $C_{\A^{-1}}$ in $L^2(\mu_{\gamma}^{|\cdot|})$ are unitarily equivalent.
\end{lem}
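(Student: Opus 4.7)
The plan is to exhibit an explicit unitary $V\colon L^2(\mu_{1/\gamma}^{|\cdot|})\to L^2(\mu_{\gamma}^{|\cdot|})$ implementing the equivalence and to verify the intertwining relation by pointwise computation. I would take $(Vf)(x)=f(x)/\gamma(|x|^2)$ on $\R^\kappa\setminus\{0\}$. Since $\gamma\in\escr_+$ is strictly positive on $(0,\infty)$ and the singleton $\{0\}$ is Lebesgue-negligible, the identity $\int|f|^2\,\D\mu_{1/\gamma}^{|\cdot|}=\int|Vf|^2\,\D\mu_{\gamma}^{|\cdot|}$ follows immediately from the definitions of the two measures, and $V$ is an isometric isomorphism with inverse $V^{-1}g=\gamma(|\cdot|^2)\,g$.

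Next I would apply the derivation of \eqref{pochodna} with $1/\gamma$ in place of $\gamma$; this is a pure change-of-variables computation in which positivity almost everywhere is all that is needed. It yields $\hsf_\A(x)=\frac{1}{|\det\A|}\cdot\frac{\gamma(|x|^2)}{\gamma(|\A^{-1}x|^2)}$ on $\R^\kappa\setminus\{0\}$ for the measure $\mu_{1/\gamma}^{|\cdot|}$. Since $\A$ and $\A^{-1}$ are linear, hence Borel-measurable, and nonsingular, formula \eqref{adjoint} then gives the explicit description $(|\det\A|\,\ca^*)f(x)=\frac{\gamma(|x|^2)}{\gamma(|\A^{-1}x|^2)}\,f(\A^{-1}x)$ on $\dom(\ca^*)$.

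To finish, I would substitute these formulas in: for $f\in\dom(\ca^*)$ one computes $V(|\det\A|\,\ca^*f)(x)=\frac{1}{\gamma(|x|^2)}\cdot\frac{\gamma(|x|^2)}{\gamma(|\A^{-1}x|^2)}\,f(\A^{-1}x)=(Vf)(\A^{-1}x)=C_{\A^{-1}}(Vf)(x)$, so that $Vf\in\dom(C_{\A^{-1}})$ and the intertwining relation $V(|\det\A|\,\ca^*)\subseteq C_{\A^{-1}}V$ holds. Running the same pointwise identity with $V^{-1}$ on the other side yields the reverse inclusion, and therefore $V(|\det\A|\,\ca^*)V^{-1}=C_{\A^{-1}}$. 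The only mildly subtle point is the equality of domains, but because $V$ is a unitary multiplication operator by a positive function and both operators are closed, it reduces to the pointwise identity just verified.
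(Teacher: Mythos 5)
Your proposal is correct and follows essentially the same route as the paper: the same unitary $U f = f/\gamma(|\cdot|^2)$, the same ingredients \eqref{adjoint} and \eqref{pochodna} (applied to the density $1/\gamma$), and a change-of-variables computation to match domains and verify the intertwining. The only cosmetic difference is that you organize the domain correspondence around the pointwise operator identity and unitarity of $U$, whereas the paper records it as a single integral identity; your closing appeal to closedness is unnecessary, since the two-sided domain inclusion already follows from running the explicit computation with $U^{-1}$ as you indicate.
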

\begin{proof}
Clearly, the map $U\colon L^2(\mu_{1/\gamma}^{|\cdot|})\ni f\mapsto f_\gamma (\cdot):=\frac{f(\cdot)}{\gamma(|\cdot|^2)}\in L^2(\mu_\gamma^{|\cdot|})$ is a unitary operator. By \eqref{adjoint} and \eqref{pochodna}, $f\in L^2(\mu_{1/\gamma}^{|\cdot|})$ belongs to $\dom(\ca^*)$ if and only if
\begin{align*}
\int_{\R^\kappa} \big|(f\circ A^{-1})(x)\hsf_\A(x)\big|^2 \D\mu_{1/\gamma}^{|\cdot|}(x)<\infty.
\end{align*}
Since, by the change-of-variable theorem (cf. \cite[Theorem 7.26]{rud}), we have
\allowdisplaybreaks
\begin{align*}
\int_{\R^\kappa} |f_\gamma(x)|^2 \hsf_{\A^{-1}}(x)\D \mu_\gamma^{|\cdot|}(x)
&=\int_{\R^\kappa} |f(\A^{-1}x)|^2 \frac{\gamma(|x|^2)}{\big(\gamma(|\A^{-1}x|^2)\big)^2}\D \M_\kappa(x)\\
&=|\det \A|^2\int_{\R^\kappa} \big|(f\circ A^{-1})(x)\hsf_\A(x)\big|^2 \D\mu_{1/\gamma}^{|\cdot|}(x),
\end{align*}
we see that $f\in \dom(\ca^*)$ is equivalent to $f_\gamma\in \dom(C_{\A^{-1}})$. This and elementary computations implies that $C_{\A^{-1}}U=U |\det \A| \ca^*$, which proves our claim.
\end{proof}
It was shown in \cite[Theorem 32]{b-j-j-s-aim} that a normal linear transformation $\A$ of $(\R^\kappa, |\cdot|)$ induces subnormal composition operator $\ca$ in $L^2(\mu_\gamma^{|\cdot|})$. The proof of this fact involved a highly non-trivial construction of a measurable family of probability measures satisfying the so-called consistency condition. Below we prove this fact in a different manner, based on the following version of \cite[Theorem 3.1.2]{b-j-j-s-jmaa-12} (we include the proof, which is similar to that of the original result, for the reader's convenience).
\begin{lem}\label{projective}
Let $S$ be a densely defined operator in a complex Hilbert space $\hh$. Suppose that there are a family $\{\hh_k\}_{k\in\N}$ of Hilbert spaces such that $\hh_k\downarrow\hh$ as $k\to\infty$, and a set $\xx\subseteq \hh$ such that
   \begin{enumerate}
   \item[(i)] $\xx \subseteq \dom^\infty(S)$,% and $S^n(\xx) \subseteq \hh$ for all $n\in \Z_+$,
   \item[(ii)] $\ff:=\lin \bigcup_{n=0}^\infty S^n(\xx)$ is a core of $S$,
   \item[(iii)] $\ff$ is dense in $\hh_k$ for every $k \in \N$,
   \item[(iv)] $S|_{\ff}$ is  a subnormal operator in $\hh_k$ for every $k \in \N$.
   \end{enumerate}
Then $S$ is subnormal.
\end{lem}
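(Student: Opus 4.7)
\medskip
\noindent\textbf{Proof plan.}
The plan is to invoke the criterion \cite[Theorem 3.1.2]{b-j-j-s-jmaa-12} twice: once in each $\hh_k$ to extract a scalar positive-semidefiniteness (PSD) condition on moments formed from the $\hh_k$-inner products of vectors $S^n x$ with $x \in \xx$, and once in $\hh$ to deduce subnormality of $S$ from the corresponding condition in $\hh$. The bridge is the hypothesis $\hh_k\downarrow\hh$, which forces the $\hh_k$-inner products to converge to the $\hh$-inner product on the relevant vectors, and PSD matrices are preserved under entrywise limits.

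More concretely, (i) and (ii) already supply the combinatorial structure---$\xx \subseteq \dom^\infty(S)$ and $\ff = \lin\bigcup_n S^n(\xx)$ a core of $S$---that \cite[Theorem 3.1.2]{b-j-j-s-jmaa-12} requires; the remaining content of that criterion is a family of inequalities of the form
\[ \sum_{i,j=0}^m \alpha_i \bar{\alpha}_j\,\langle S^i x_i,\, S^j x_j\rangle \;\Ge\; 0 \qquad (m\in\Z_+,\ x_0,\dots,x_m\in\xx,\ \alpha_0,\dots,\alpha_m\in\C) \]
(or its natural ``moment'' variant coming from \cite{c-s-sz}). By (iii) and (iv), applying the same criterion inside each $\hh_k$ gives the analogous inequalities with $\langle\cdot,\cdot\rangle_{\hh_k}$ in place of $\langle\cdot,\cdot\rangle_{\hh}$, for every $k\in\N$.

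Observing that $\xx\cup\bigcup_n S^n(\xx)\subseteq\dom^\infty(S)\subseteq\hh\subseteq\hh_k$, each inner product $\langle S^i x_i, S^j x_j\rangle_{\hh_k}$ converges as $k\to\infty$ to $\langle S^i x_i, S^j x_j\rangle_{\hh}$ by polarization applied to $\|\cdot\|_{\hh}=\lim_k\|\cdot\|_{\hh_k}$. Passing to the limit in each finite-size PSD inequality preserves ``$\Ge 0$'', so the PSD condition transfers to $\hh$. Applying the criterion in the reverse direction---this time in $\hh$, with (i), (ii), and the just-established PSD condition---yields subnormality of $S$.

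The main obstacle I anticipate is verifying that the PSD condition produced by \cite[Theorem 3.1.2]{b-j-j-s-jmaa-12} is genuinely an \emph{intrinsic} scalar condition on the inner products of vectors from $\xx$ and $\bigcup_n S^n(\xx)$, so that its transfer across $\hh_k\downarrow\hh$ is automatic. One should also confirm that $S|_\ff$ has the same pointwise action regardless of the ambient Hilbert space, so that the ``moments'' in $\hh$ and in $\hh_k$ really are the same scalars up to the choice of inner product. Once these bookkeeping points are in place, the proof reduces to a routine limit argument on finite matrices of inner products, with no further analytic work required.
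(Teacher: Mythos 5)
Your plan is essentially the paper's proof: the paper applies \cite[Theorem 21]{c-s-sz} to $S|_\ff$ in each $\hh_k$ (using that $\ff$ is $S$-invariant), passes to the limit in the resulting scalar positivity inequalities via polarization and $\hh_k\downarrow\hh$, applies the same theorem back in $\hh$ to get subnormality of $S|_\ff$, and finishes with the core hypothesis (ii). The one correction is that the relevant positivity condition is not the rank-one (Bram--Halmos type) inequality you display, which is known to be insufficient for unbounded operators, but the stronger Cicho\'n--Stochel--Szafraniec condition $\sum_{i,j}\sum_{p,q}a_{p,q}^{i,j}\is{S^pf_i}{S^qf_j}\Ge 0$ for all coefficient systems satisfying $\sum_{i,j}\sum_{p,q}a_{p,q}^{i,j}\lambda^p\bar\lambda^q z_i\bar z_j\Ge 0$; your entrywise-limit argument transfers verbatim to this form, so the gap is only in the precise citation, not in the method.
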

\begin{proof}
We prove that $S|_\ff$ is subnormal in $\hh$. To this end we consider any finite system $\{a_{p,q}^{i,j}\}_{p,q = 0, \ldots, n}^{i,j=1, \ldots, m} \subset \C$ such that
\begin{align} \label{1}
\sum_{i,j=1}^m \sum_{p,q=0}^n a_{p,q}^{i,j} \lambda^p \bar \lambda^q z_i \bar z_j \Ge 0, \quad \lambda, z_1, \ldots, z_m \in \C,
\end{align}
Since $S|_\ff$ is subnormal in $\hh_k$ for every $k\in\N$ and $\ff$ is invariant for $S$, we obtain by \cite[Theorem 21]{c-s-sz} that
\begin{align*}
\sum_{i,j=1}^m \sum_{p,q=0}^n a_{p,q}^{i,j} \is{S^p f_i} {S^q f_j}_{\hh_k} \Ge 0, \quad f_1, \ldots, f_m \in \ff,\ k\in\N.
\end{align*}
Clearly, the polarization formula and the fact that $\hh_k\downarrow \hh$ as $k\to\infty$ imply that $\lim_{k\to\infty} \is{x}{y}_{\hh_k}=\is{x}{y}_\hh$ for all $x,y\in\hh$. Therefore we have
\begin{align*}
\sum_{i,j=1}^m \sum_{p,q=0}^n a_{p,q}^{i,j} \is{S^p f_i} {S^q f_j}_{\hh} \Ge 0, \quad f_1, \ldots, f_m \in \ff.
\end{align*}
In view of \cite[Theorem 21]{c-s-sz}, the above implies subnormality of $S|_\ff$ in $\hh$. This and the fact that $\ff$ is a core of $S$ yields subnormality of $S$.
\end{proof}

\begin{prop}[\mbox{\cite[Theorem 32]{b-j-j-s-aim}}] \label{matrical-subn}
Let $\gamma$ be in $\escr_+$, $|\cdot|$ be a norm on $\R^\kappa$ induced by an inner product and $\A$ be an invertible linear transformation of $\R^\kappa$. If $\A$ is normal in $(\R^\kappa, |\cdot|)$, then $\ca$ is subnormal in $L^2(\mu_\gamma^{|\cdot|})$.
\end{prop}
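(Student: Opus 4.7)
The plan is to apply Lemma \ref{projective} to $S=\ca$ in $\hh:=L^2(\mu_\gamma^{|\cdot|})$, using a sequence $\hh_k$ of $L^2$-spaces with polynomial weights chosen so that $\ca$ is automatically bounded (hence, by Theorem \ref{matrical-subn-b}, subnormal) on each $\hh_k$. Write $\gamma(z)=\sum_{n=0}^\infty a_n z^n$, let $\gamma_k(z):=\sum_{n=0}^k a_n z^n$, and pick $k$ large enough that $\gamma_k\in\escr_+$. Set $\hh_k:=L^2(\mu_{\gamma_k}^{|\cdot|})$. Because $0\le\gamma_k\le\gamma_{k+1}\le\gamma$ pointwise and $\gamma_k\to\gamma$ pointwise, the inclusions $\hh\subseteq\hh_{k+1}\subseteq\hh_k$ are immediate and monotone convergence yields $\|f\|_{\hh_k}\to\|f\|_{\hh}$ for every $f\in\hh$; hence $\hh_k\downarrow\hh$ as $k\to\infty$.

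Next, in the spirit of Corollary \ref{core}, I take $\bb:=\{\sigma\cap B_n:\sigma\in\bor{\R^\kappa},\ n\in\N\}$ with $B_n:=\{x\in\R^\kappa:|x|\le n\}$, set $\xx:=\{\chi_\omega:\omega\in\bb\}$, and $\ff:=\lin\xx$. Every element of $\ff$ is bounded with bounded support, so $\ff$ is contained in $L^2(\mu_\gamma^{|\cdot|})$ and in each $L^2(\mu_{\gamma_k}^{|\cdot|})$. Because $\A$ is linear and invertible, $\A^{-n}(\omega)$ is again of the form $\sigma'\cap B_m$ whenever $\omega\in\bb$, so $\ca^n\chi_\omega=\chi_{\A^{-n}(\omega)}\in\xx$; hence $\xx\subseteq\dom^\infty(\ca)$ and $\ff=\lin\bigcup_{n=0}^\infty \ca^n(\xx)$. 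The family $\bb$ satisfies (i)--(iii) of Lemma \ref{rdzen1+}, so Proposition \ref{rdzen2} shows that $\ff$ is a core of $\ca$ in $\hh$, and Lemma \ref{rdzen1+} applied with $\nu=\mu_{\gamma_k}^{|\cdot|}$ shows that $\ff$ is dense in $\hh_k$ for every $k$.

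It remains to verify hypothesis (iv) of Lemma \ref{projective}. Since $\gamma_k$ is a polynomial, Theorem \ref{bounded}(1) gives that $\ca$ is bounded on $\hh_k$, and then Theorem \ref{matrical-subn-b}, together with the assumed normality of $\A$, yields that $\ca$ is subnormal in $\hh_k$. Any normal extension of the bounded operator $\ca$ on $\hh_k$ is \emph{a fortiori} a normal extension of $\ca|_\ff$, so $\ca|_\ff$ is subnormal in $\hh_k$. All four hypotheses of Lemma \ref{projective} now hold and the proposition follows. The only really nontrivial choice is that of the approximating spaces $\hh_k$: replacing $\gamma$ by its polynomial truncations is essentially forced because the same truncation must simultaneously make $\ca$ bounded on $\hh_k$ (Theorem \ref{bounded}(1)) and produce the monotone convergence of norms encoded in $\hh_k\downarrow\hh$; the rest of the argument is bookkeeping over the earlier lemmas.
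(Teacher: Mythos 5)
Your proof is correct and follows essentially the same route as the paper: truncate $\gamma$ to its polynomial partial sums $\gamma_k$, observe that $L^2(\mu_{\gamma_k}^{|\cdot|})\downarrow L^2(\mu_\gamma^{|\cdot|})$, invoke Theorems \ref{bounded} and \ref{matrical-subn-b} to get bounded subnormal composition operators on each approximating space, and conclude via Lemma \ref{projective}. The only (immaterial) deviation is your choice of $\xx$ as the set of characteristic functions of bounded Borel sets rather than $\xx=\dom^\infty(\ca)$ as in the paper; if anything this makes the verification of condition (iii) of Lemma \ref{projective} more explicit.
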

\begin{proof}
Since $\gamma\in\escr_+$, we have $\gamma(z) = \sum_{n=0}^\infty a_n z^n$ for all $z \in \C$. For $k\in\N$, let $\gamma_k$ be a polynomial given by $\gamma_k(z)= \sum_{n=0}^k a_n z^n$, $z\in\C$. Without loss of generality we may assume that $a_1>0$. Hence for every $k\in\N$, $\mu_{\gamma_{k}}\neq 0$. Since for every $l\in\N$, $\mu_{\gamma_l}^{|\cdot|}$ is absolutely continuous with respect to $\mu_{\gamma_{l+1}}^{|\cdot|}$, and $\mu_{\gamma_{l}}^{|\cdot|}$ is absolutely continuous with respect to $\mu_{\gamma}^{|\cdot|}$, we deduce that $L^2(\mu_{\gamma_k}^{|\cdot|})\downarrow L^2(\mu_\gamma^{|\cdot|})$ as $k\to\infty$. Moreover, by Theorems \ref{bounded} and \ref{matrical-subn-b}, $\A$ induces a bounded subnormal composition operator on every $L^2(\mu_{\gamma_k}^{|\cdot|})$, $k\in\N$. Let $\xx=\dom^\infty(\ca)$. Then we have
\begin{align*}
\ff:=\lin \bigcup_{n=0}^\infty \ca^n(\xx)= \dom^\infty(\ca)
\end{align*}
and so, by Corollary \ref{core}, $\ff$ is a core of $\ca$. Moreover, for every $k\in\N$, $\ca|_{\ff}$ is subnormal in $L^2(\mu_{\gamma_k}^{|\cdot|})$. We complete the proof by applying Lemma \ref{projective}.
\end{proof}
Combining Lemma \ref{uniteqiv} and the proposition above we may prove Theorem \ref{matrical-cosubn}.
\begin{proof}[Proof of Theorem \ref{matrical-cosubn}]
Since $\A$ is normal in $(\R^\kappa, |\cdot|)$, $\A^{-1}$ is normal in $(\R^\kappa, |\cdot|)$ as well. This implies that $C_{\A^{-1}}$ is subnormal in $L^2(\mu_\gamma^{|\cdot|})$ by Proposition \ref{matrical-subn}. Therefore, by Lemma \ref{uniteqiv}, we see that $\ca$ is cosubnormal in $L^2(\mu_{1/\gamma}^{|\cdot|})$.
\end{proof}


\begin{thebibliography}{99}
   \bibitem{bis}            E. Bishop, Spectral theory for operators on a Banach space,
                            {\em Trans. Amer. Math. Soc.} {\bf 86} (1957), 414-445.
\bibitem{b-j-j-s-jmaa-12}   P. Budzy\'nski, Z. J. Jab{\l}o\'nski, I. B. Jung, J. Stochel, Unbounded subnormal weighted shifts on directed trees,
                            {\em J. Math. Anal. Appl.} {\bf 394} (2012), 819-834.
\bibitem{b-j-j-s-ampa}      P. Budzy\'nski, Z. J. Jab{\l}o\'nski, I. B. Jung, J. Stochel, On unbounded composition operators in $L^2$-spaces,
                            {\em Ann. Mat. Pura Appl.} {\bf 193} (2014), 663-688.
\bibitem{b-j-j-s-jmaa-14}   P. Budzy\'nski, Z. J. Jab{\l}o\'nski, I. B. Jung, J. Stochel, A multiplicative property characterizes quasinormal composition operators in $L^2$-spaces,
                            {\em J. Math. Anal. Appl.} {\bf 409} (2014), 576-581.
\bibitem{b-j-j-s-aim}       P. Budzy\'nski, Z. J. Jab{\l}o\'nski, I. B. Jung, J. Stochel, Unbounded subnormal composition operators in $L^2$-spaces,
                            {\em J. Funct. Anal.}, to appear.
\bibitem{b-j-j-s-wco}       P. Budzy\'nski, Z. J. Jab{\l}o\'nski, I. B. Jung, J. Stochel, Unbounded weighted composition operators in $L^2$-spaces,
                            {\em preprint}, http://arxiv.org/abs/1310.3542.
\bibitem{bud-pla-s2}        P. Budzy\'nski, A. P{\l}aneta, Dense definiteness and boundedness of composition operators in $L^2$-spaces via inductive limits,
                            {\em submitted for publication}, http://arxiv.org/abs/1409.3961.
\bibitem{cam-hor}           J. T. Campbell, W. E. Hornor, Seminormal composition operators.
                            {\em J. Operator Theory} {\bf 29} (1993), 323-343.
\bibitem{c-s-sz}             D. Cicho\'n, J. Stochel, F. H. Szafraniec, Extending positive definiteness,
                            {\em Trans. Amer. Math. Soc.} {\bf 363} (2011), 545-577.
\bibitem{con}               J. B. Conway, The theory of subnormal operators,
                            Mathematical Surveys and Monographs, {\bf 36}, American Mathematical Society, Providence, RI, 1991.
\bibitem{dan-sto}           A. Daniluk, J. Stochel, Seminormal composition operators induced by affine transformations,
                            {\em Hokkaido Math. J.} {\bf 26} (1997), 377-404.
\bibitem{foi}               C. Foia\c{s}, D\'ecompositions en op\'erateurs et vecteurs propres. I., \'Etudes de ces d\`ecompositions et leurs rapports avec les prolongements des op\'erateurs,
                            {\em Rev. Roumaine Math. Pures Appl.} {\bf 7} (1962), 241-282.
\bibitem{jab}               Z. Jab{\l}o\'{n}ski, Hyperexpansive composition operators,
                            {\em Math. Proc. Camb. Phil. Soc.} {\bf 135} (2003), 513-526.
\bibitem{jan}               J. Janas, Inductive limit of operators and its applications,
                            {\em Studia Math.} {\bf 90} (1988), 87-102.
\bibitem{mla}               W. Mlak, Operators induced by transformations of {G}aussian variables,
                            {\em Ann. Polon. Math.} \textbf{46} (1985), 197-212.
\bibitem{mar}               A. V. Marchenko, Selfadjoint differential operators with an infinite number of independent variables,
                            {\em Mat. Sb. $($N.S.$)$}, {\bf 96}, (1975), 276–293.
\bibitem{rud}               W. Rudin, Real and Complex Analysis,
                            McGraw-Hill, 1987.
\bibitem{sie}               W. Sierpi\'nski, Un th\'eor\`{e}me g\'en\'erale sur les families d'ensemble,
                            {\em Fundamenta Mathematicae} {\bf 12} (1928), 206-210
\bibitem{sin-man}           R. K. Singh, J. S. Manhas, Composition Operators on Function Spaces,
                            North-Holland, 1993.
\bibitem{sto}               J. Stochel, Seminormal composition operators on ${L}^2$ spaces induced by matrices,
                            {\em Hokkaido Math. J.} \textbf{19} (1990), 307--324.
\bibitem{sto-sto}           J. Stochel, J. B. Stochel, Seminormal composition operators on ${L}^2$ spaces induced by matrices: {T}he {L}aplace density case,
                            {\em J. Math. Anal. Appl.} \textbf{375} (2011), 1-7.
\bibitem{sto-sza-1}         J. Stochel, F. H. Szafraniec, On normal extensions of unbounded operators. I,
                            {\em J. Operator Theory} {\bf 14} (1985), 31-55.
\bibitem{sto-sza-2}         J. Stochel and F. H. Szafraniec, On normal extensions of unbounded operators, II,
                            {\em Acta Sci. Math. $($Szeged$)$} {\bf 53} (1989), 153-177.
\bibitem{sto-sza-3}         J. Stochel, F. H. Szafraniec, On normal extensions of unbounded operators. III. Spectral properties,
                            {\em Publ. RIMS, Kyoto Univ.} {\bf 25} (1989), 105-139.
\end{thebibliography}
\end{document}